\newcommand{\NNN}{\mathcal{N}}
\newcommand{\bbb}{\mathbf{b}}
\newcommand{\bbn}{\mathbf{n}}
\newcommand{\bbq}{\mathbf{q}}
\newcommand{\bbr}{\mathbf{r}}
\newcommand{\bbt}{\mathbf{t}}
\newcommand{\bbx}{\mathbf{x}}
\newcommand{\RR}{\mathbb{R}}
\newcommand{\real}{\mathbb{R}}
\newcommand{\SSS}{\mathbb{S}}
\newcommand{\pd}{\partial}
\newcommand{\dd}{\mathrm{d}}
\newcommand{\beq}{\begin{equation}}
\newcommand{\eeq}{\end{equation}}
   \newtheorem{theorem}{Theorem}
   \newtheorem{proposition}[theorem]{Proposition}
   \newtheorem{corollary}[theorem]{Corollary}
   \newtheorem{lemma}[theorem]{Lemma}
   \newtheorem{definition}[theorem]{Definition}
 \theoremstyle{remark}
   \newtheorem{example}[theorem]{Example}
\begin{document}
\title{Monge surfaces and planar geodesic foliations}
\author{David Brander}
\address{Department of Applied Mathematics and Computer Science,\\
Technical University of Denmark\\
 Matematiktorvet, Building 303 B\\
DK-2800 Kgs. Lyngby\\ Denmark}
\email{dbra@dtu.dk}

\author{Jens Gravesen}
\address{Department of Applied Mathematics and Computer Science,\\
Technical University of Denmark\\
 Matematiktorvet, Building 303 B\\
DK-2800 Kgs. Lyngby\\ Denmark}
\email{jgra@dtu.dk}
\date{}				


\begin{abstract}
A Monge surface is a surface obtained by sweeping  a generating plane curve 
along a trajectory that is orthogonal to the moving plane containing the curve. 
Locally, they are characterized as being foliated by a family of planar geodesic lines of curvature.
We call surfaces with the latter property PGF surfaces, and investigate the global properties
of these two naturally defined objects.
The only compact orientable PGF surfaces are tori;  these are globally Monge surfaces, 
and they have a simple characterization in terms of the directrix. 
We show how to produce many examples of Monge tori and Klein bottles,
as well as tori that do not have a closed directrix.
\end{abstract}

\maketitle

\section{Introduction}
A \emph{Monge surface} is the surface swept out by the motion of a plane curve 
(the \emph{profile curve} or \emph{generatrix}) whilst the plane is moved through space in such a way that the movement of the plane is always in the direction of the normal to the plane. 
They are also characterized as surfaces swept out by \emph{parallels},
each of which is the orthogonal trajectory of a point on the initial plane curve, and
all of which are parallel to each other.
These surfaces were defined by Monge  (\cite{monge}, 	\textsection XXV) and classically studied by,
e.g. Darboux (\cite{darboux}, Section 85), and L. Raffy \cite{raffy}.  Each meridian (i.e., the intersection of the surface with the moving plane)
is a normal section of the surface, and therefore a geodesic. Moreover, the meridians are
all congruent. Hence a Monge surface
is foliated by (congruent) planar geodesics. 

 Conversely, various practical applications, such as surfaces made from strips of wood \cite{bridges2017},
are modelled by surfaces foliated by planar geodesic lines of curvature.  We call a surface with such a foliation
 a \emph{PGF surface}.  It was known classically (see \cite{eisenhart}, Section 129) that these 
surfaces are \emph{locally} the same objects as Monge surfaces.  The purpose of this note is to consider the 
difference between the global objects given by these two natural definitions, and to investigate the
problem of constructing compact examples.

To illustrate the difference at a global level
 between Monge surfaces and PGF surfaces, it is not hard to construct examples of  
PGF surfaces that are certainly not Monge surfaces (Figure \ref{fig2}). On the other
hand, since a Monge surface is naturally defined by the arbitrary orthogonal motion of a plane
through space, singularities naturally arise.  For example, a sphere can be represented as a Monge
surface with singularities at the poles, but a sphere does not have a global foliation by planar geodesics.

 We investigate in a modern framework the basics of both the local and global 
theory that follows directly from the two natural definitions, focusing on PGF surfaces in 
Section \ref{pgfsection} and  Monge surfaces in Section \ref{mongesection}. 

In the last section we study the construction of \emph{Monge tori}. We define a Monge torus to
be a Monge surface that has the property that the profile curve and all of the parallels are closed curves.
We show how to construct many examples of Monge tori and Klein bottles.
We also answer a natural question that arises in the global theory of PGF surfaces:
in Section \ref{pgfsection} we observe (Proposition \ref{toriprop}) that if $\bbx: M \to \real^3$
is a PGF immersion, and $M$ is compact, then $M$ is necessarily a torus.  We also show (Theorem \ref{thm2}) that any 
complete immersed PGF surface $S \subset \real^3$ has a natural covering $\real^2 \to S$ consisting of
lines of curvature, one family of which is the planar geodesic foliation, the other 
family being parallels of a Monge surface parameterization.  If $S \subset \real^3$ is compact,
one can ask whether or not the leaves of this double foliation are closed. 
It is easy to see that the planar geodesics are necessarily closed.  However, one can show that
the parallels need not be closed by constructing examples of Monge surfaces that are tori
 but not ``Monge tori'' -- i.e., the parallels are not closed curves.

\section{Planar Geodesic Foliations}  \label{pgfsection}
If a plane curve is not a straight line, then the plane that it lies in is unique, and
so a family of typical planar curves gives us a natural family of planes in $\RR^3$.
For the case of a straight line, there are infinitely many planes that contain it.
In order that there be a canonical choice, consider this property of the plane
containing a non-rectilinear planar geodesic: the plane is orthogonal to the surface. 
This plane, determined by the tangent to the curve and the surface normal,
is unique;  and  a straight line in a surface lies in such a plane
if and only if the normal to the surface is constant along the line.  
\begin{definition} \label{defn1}
 A \emph{planar geodesic foliation} of an immersed surface $S \subset \real^3$ is a $1$-dimensional
foliation, the leaves of which are planar geodesic curves in $S$, with the additional property that,
 along each of the leaves, the normal to the surface is
parallel to the plane containing the curve.
\end{definition}
Note: it is not difficult to show that the conditions given on the leaves of the foliation are 
equivalent to requiring that they be planar geodesic \emph{lines of curvature}.
The above definition includes all foliations by non-rectilinear planar geodesics.  If the geodesics
are straight lines, then the surface is developable, because the normal is required to
be constant along the rulings.  We will call a surface together with a planar geodesic foliation
a \emph{planar geodesically foliated (PGF) surface}.

\subsection{Local planar geodesic foliations}
Locally, a PGF surface is just a piece of a Monge surface:
\begin{theorem}
  \label{thm:geodesic}
 A  planar geodesically foliated surface can, on a neighbourhood of any point,
 be locally parameterized in the form
  \begin{equation} \label{mseqn}
    \bbx(u,v)=\bbr(v)+x(u)\,\bbq_1(v)+y(u)\,\bbq_2(v)\,,
  \end{equation}
  with $\bbr$  a space curve with rotation minimizing frame
  $(\bbt=\bbr^\prime, \bbq_1,\bbq_2)$, and each iso-curve $\gamma_{v}(u) =  \bbx(u,v)$
	 a unit speed geodesic.
\end{theorem}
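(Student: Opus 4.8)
The plan is to build the parameterization directly from the foliation together with its orthogonal trajectories, and then to show that the profile curve is the \emph{same} curve in every plane once the normal frame is chosen to be rotation minimizing.

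First I would set up adapted coordinates. Working locally, I parameterize $S$ by $\bbx(u,v)$ so that the $u$-curves are the leaves of the foliation traversed at unit speed ($|\bbx_u|=1$) and the $v$-curves are orthogonal trajectories, so that $\bbx_u\cdot\bbx_v=0$; such a coordinate net exists locally for any one-dimensional foliation. Writing $\bbN$ for the unit normal, set $\bbt:=\bbx_u\times\bbN$, a unit tangent field orthogonal to each leaf. Each leaf $\gamma_v$ lies in a fixed plane $P_v$ with some constant unit normal $\bbn_v$; planarity gives $\bbx_u\cdot\bbn_v=0$, while the defining condition that $\bbN$ be parallel to $P_v$ gives $\bbN\cdot\bbn_v=0$. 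Hence $\bbn_v$ is orthogonal to both $\bbx_u$ and $\bbN$, forcing $\bbn_v=\pm\bbt$; since $\bbn_v$ is constant along the leaf, $\bbt$ is locally constant in $u$, i.e.\ $\bbt=\bbt(v)$. The same computation, via $\partial_u\bbt=\bbx_{uu}\times\bbN+\bbx_u\times\bbN_u=\bbx_u\times\bbN_u$ (using the geodesic property $\bbx_{uu}\parallel\bbN$), recovers the line-of-curvature property noted above. Because $\bbx_v$ is tangent to $S$ and orthogonal to $\bbx_u$, it lies along $\bbt$, so $\bbx_v=g(u,v)\,\bbt(v)$ for a positive function $g$.

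Next I would fix the directrix and the frame. I take $\bbr(v):=\bbx(0,v)$, an orthogonal trajectory, and reparameterize $v$ so that it has unit speed, giving $\bbr'(v)=\bbt(v)$. Along $\bbr$ I build the rotation minimizing frame $(\bbt,\bbq_1,\bbq_2)$, characterized by the fact that each $\bbq_i'$ is parallel to $\bbt$ (no rotation about the $\bbt$-axis). Since $\gamma_v(u)-\bbr(v)$ lies in the plane $P_v=\operatorname{span}(\bbq_1(v),\bbq_2(v))$, I may write $\bbx(u,v)=\bbr(v)+x(u,v)\,\bbq_1(v)+y(u,v)\,\bbq_2(v)$ with $x(0,v)=y(0,v)=0$.

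The crux is then to show that $x$ and $y$ do not depend on $v$, and this is exactly where the rotation minimizing frame is used. Differentiating $x=(\bbx-\bbr)\cdot\bbq_1$ in $v$ gives $\partial_v x=(\bbx_v-\bbr')\cdot\bbq_1+(\bbx-\bbr)\cdot\bbq_1'$. The first term vanishes because $\bbx_v-\bbr'=(g-1)\,\bbt$ is orthogonal to $\bbq_1$; the second vanishes because $\bbx-\bbr=x\bbq_1+y\bbq_2$ is orthogonal to $\bbt$, while $\bbq_1'\parallel\bbt$. The identical argument applies to $y$, so $x=x(u)$ and $y=y(u)$, yielding the stated form. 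Finally, unit speed follows from $|\bbx_u|=1$ and the geodesic property is the standing hypothesis on the leaves. I expect the only genuine obstacle to be the first step, namely verifying that $\bbt$ is constant along each leaf and that the orthogonal-trajectory net is well defined; after that the key identity $\partial_v x=\partial_v y=0$ is a short computation once $\bbq_i'\parallel\bbt$ is in hand.
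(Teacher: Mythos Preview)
Your argument is correct in outline and reaches the conclusion, but the justification you give for the very first step is wrong as stated. It is \emph{not} true that ``such a coordinate net exists locally for any one-dimensional foliation'': for a general foliation one cannot simultaneously arrange $|\bbx_u|=1$ along the leaves and $\bbx_u\cdot\bbx_v=0$ (the foliation of an annulus by concentric circles is already a counterexample). What saves you here is precisely the geodesic hypothesis, which you are not using at this point. Pick one orthogonal transversal, parameterize it by $v$, and let $u$ be arc-length along each leaf measured from that transversal; then
\[
\partial_u(\bbx_u\cdot\bbx_v)=\bbx_{uu}\cdot\bbx_v+\bbx_u\cdot\bbx_{uv}=0+\tfrac12\,\partial_v|\bbx_u|^2=0,
\]
because $\bbx_{uu}\parallel\bbN$ (geodesic) and $|\bbx_u|\equiv1$. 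Hence $\bbx_u\cdot\bbx_v\equiv0$ along every leaf, which is exactly the Gauss--lemma mechanism for semi-geodesic coordinates. With this correction your proof goes through: $\bbt=\bbt(v)$, $\bbx_v=g\,\bbt$, and the computation $\partial_v x=(\bbx_v-\bbr')\cdot\bbq_1+(\bbx-\bbr)\cdot\bbq_1'=0$ is clean.

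Your route genuinely differs from the paper's. The paper does not insist on orthogonal $v$-curves, nor does it place the directrix on the surface; it integrates the plane normals abstractly to get $\bbr$, writes $\bbx=\bbr+x(u,v)\bbq_1+y(u,v)\bbq_2$, and then uses the condition $\bbn_v\cdot\bbx_v=0$ to obtain the Jacobian identity $x_uy_v-y_ux_v=0$, concluding that all profile curves $(x(\cdot,v),y(\cdot,v))$ are reparameterizations of a single plane curve; the unit-speed hypothesis then forces the reparameterization to be a translation in $u$. Your choice of an on-surface directrix together with orthogonal coordinates short-circuits that rank argument: once $\bbx_v$ is parallel to $\bbt$ and $\bbq_i'\parallel\bbt$, the vanishing of $\partial_v x,\partial_v y$ is immediate. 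The trade-off is that you must invoke the geodesic property up front (for Gauss's lemma), whereas the paper defers it to the final step.
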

\begin{proof}
Without loss of generality, we can assume the surface is (locally)  parameterized 
as $(u,v)\mapsto\bbx(u,v)$, where the iso-curve
$\gamma_{v_0}(u) := \bbx(u,v_0)$ is a planar unit-speed geodesic for each fixed $v_0$.
Now, varying $v$
 gives us a one parameter family $\Pi(v)$ of planes in $\RR^3$. 
Let $\NNN(v)$ be a family of normals to these planes.
By integrating 
$\bbr^\prime(v) = \NNN(v)$,  with $\bbr(v_0) = \bbx(u_0, v_0)$,
  we obtain a curve $\bbr (v)$, the \emph{spine curve} (or \emph{directrix}),
	that intersects the planes orthogonally.
		Given any differentiable curve $\bbr$
and any pair of vectors $Q_1$, $Q_2$ orthogonal to $\bbr^\prime(v_0)$, there
is a unique orthonormal frame field (the \emph{rotation minimizing frame}, or \emph{relatively parallel adapted frame}, \cite{bishop})
 $(\bbt(v),\bbq_1(v),\bbq_2(v))$, where $\bbt=\bbr^\prime/|\bbr^\prime|$,
$(\bbq_1, \bbq_2)(v_0)=(Q_1, Q_2)$, and such that the derivatives of $\bbq_i$
 are parallel to $\bbr^\prime$, i.e., there is no rotation in the plane perpendicular to the spine.	
	Let $(\bbt, \bbq_1, \bbq_2)$ be
	the rotation minimizing frame along $\bbr$, with some choice of $Q_1$, $Q_2$.
 Since each curve
$\gamma_{v}$ lies in the plane through $\bbr(v)$ spanned by $\bbq_i(v)$,
we can write
\begin{equation} \label{eq:gamp}
  \bbx(u,v)=\bbr(v) + x(u,v)\,\bbq_1(v) + y(u,v)\,\bbq_2(v)\,.
\end{equation}
 The tangent to each $\gamma_{v}$ is
\[
\gamma_v^\prime = \frac{\pd\bbx}{\pd u} =
  \frac{\pd x}{\pd u}\,\bbq_1 + \frac{\pd y}{\pd u}\,\bbq_2\,.
\]
Set 
\begin{equation*}  
  \bbn _v  = \bbt\times\gamma_v' =
   - \frac{\pd y}{\pd u}\,\bbq_1 + \frac{\pd x}{\pd u}\,\bbq_2\,.
\end{equation*}
At non-inflectional points, $\bbn_v$ is the principal normal of the planar geodesic $\gamma_v$,
but in any case it is perpendicular to both $\bbt$ and $\gamma_v^\prime$.
By the assumption of Definition \ref{defn1}, 
the plane containing the curve $\gamma_v$ is spanned by $\gamma_v^\prime$ and
the surface normal. Since $\bbn_v$ also lies in this plane, it follows that
$\bbn _v$ is parallel to the surface normal or,
equivalently, 
\beq \label{geodcond}
\bbn_v \cdot \frac{\pd\bbx}{\pd v} =0.  
\eeq
As $\bbt$ is a unit vector field, there exist smooth functions $\kappa_1$ and $\kappa_2$ such that
\[
\bbt^\prime(v) = \kappa_1(v) \bbq_1(v) + \kappa_2(v) \bbq_2(v).
\]
The rotation minimizing
assumption means that
\[
  \frac{\pd\bbx}{\pd v}
  = (1-x\,\kappa_1-y\,\kappa_2)\,\bbt
  + \frac{\pd x}{\pd v}\,\bbq_1 + \frac{\pd y}{\pd v}\,\bbq_2\,.
\]
So the  condition \eqref{geodcond} is 
\[
\frac{\partial x}{ \partial u} \frac{\partial y}{\partial v} -\frac{ \partial y }{\partial u} 
  \frac{\partial x }{\partial v}= 0,
\]
 i.e., that the map $\phi (u,v) = (x(u,v), y(u,v))$ has rank less than two. Since, 
from \eqref{eq:gamp}, we must have $\partial \phi/\partial u \neq 0$, the image of $\phi$ is a curve.
In other words, each curve $\alpha_v(u) = (x(u,v), y(u,v))$ is a reparameterization of
a single curve $\alpha_{v_0}(u)$.  From the assumption that $\gamma_v(u)$ is a unit speed 
geodesic, it follows easily that this reparameterization $u \mapsto \tilde u(u)$ is nothing but 
a translation $\tilde u(u) = u+c(v)$, and from this the stated result follows.
\end{proof}
We will call a parameterization of the form \eqref{mseqn} a \emph{Monge surface parameterization},
as it follows from the definition of a Monge surface that a (regular) Monge surface has this form.
We mention here that, given a Monge
surface parameterization, where both the spine curve $\bbr(v)$ and 
the profile curve $(x(u),y(u))$ are parameterized
by arc-length, the metric takes the form:
\beq \label{metric}
\dd s^2 = \dd u^2 + (1- \alpha)^2 \dd v^2, \quad \quad
  \alpha(u,v) := x(u) \kappa_1(v) - y(u) \kappa_2(v).
\eeq
Furthermore, one computes that the second fundamental form is diagonal, so $(u,v)$ 
are principal coordinates, i.e., the parallels and meridians on a Monge surface are lines
of curvature.

Let us call a surface \emph{locally planar geodesically foliated} if there exists a neighbourhood
around each point that admits a Monge surface parameterization.
Using the fact that all geodesics on a sphere are planar,
it is easy to construct complete examples of such surfaces with arbitrary topology 
by glueing in a $C^\infty$ manner surfaces of revolution to pieces of spheres, as in
Figure \ref{fig1} (left), and then assembling multiple units.

\begin{figure}[h!tbp]
	\begin{center}
	$
	\begin{array}{cc}
		\includegraphics[height=35mm]{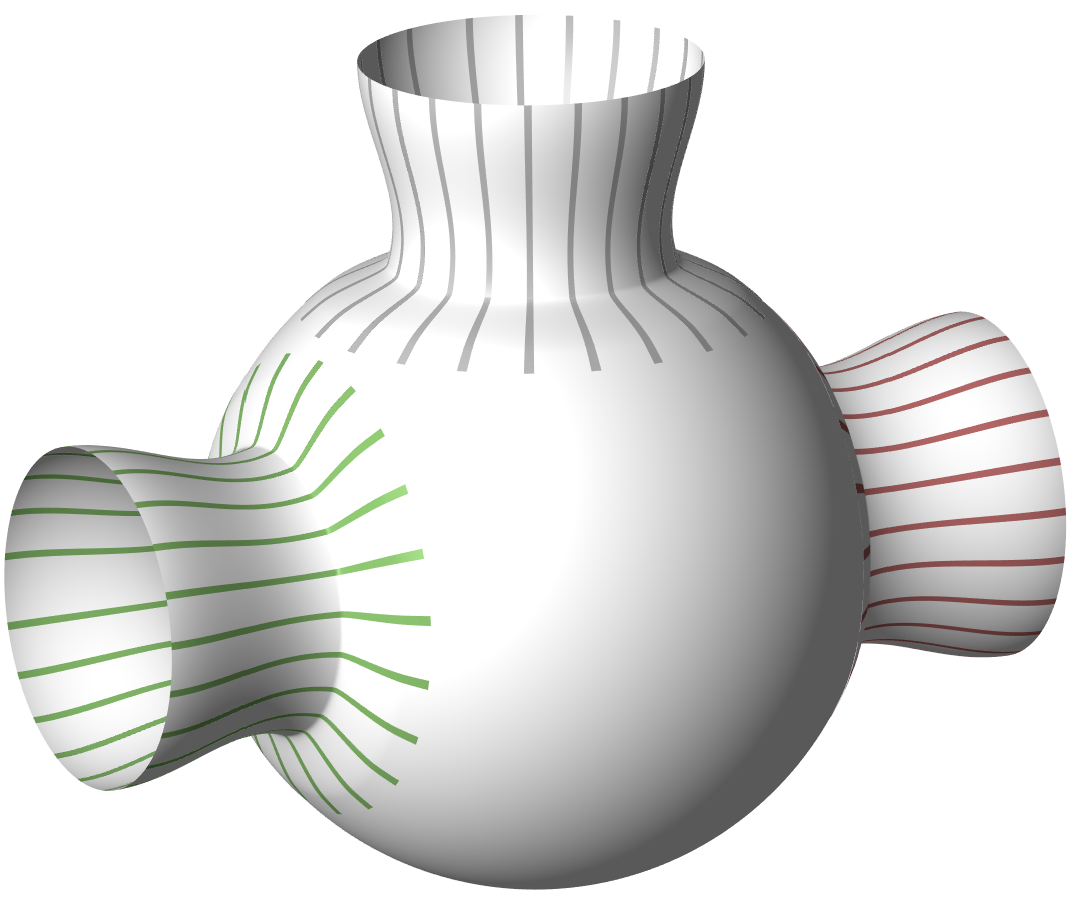}  	\quad & \quad
				\includegraphics[height=35mm]{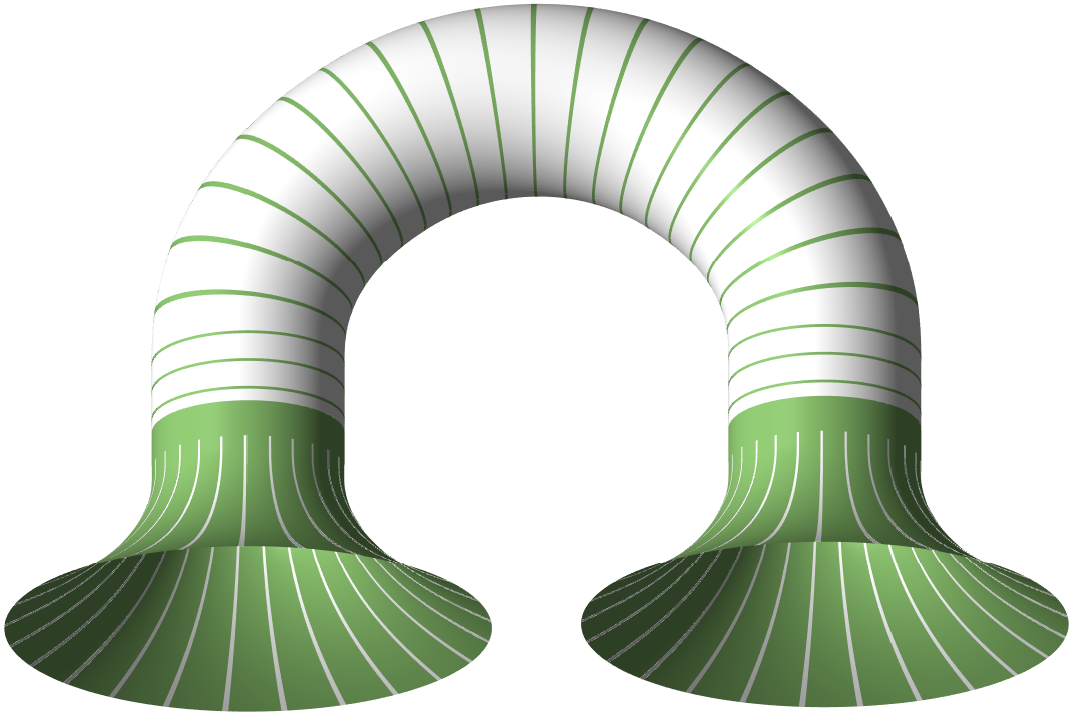}  	
\end{array}
	$
		\end{center}
	\caption{  $C^\infty$ immersed surfaces that are locally, but not globally,  PGF surfaces.}
	\label{fig1}
\end{figure}

Any surface such as those shown in Figure \ref{fig1}, where different choices 
of foliation are made for different regions of the surface, must, on
the overlap of these regions, have more than one possible planar geodesic foliation.
Obviously a piece of a plane, a sphere or a cylinder has this property.
Theorem \ref{thm:geodesic} implies that these are essentially the only possibilities:
\begin{theorem}
Let $S$ be a connected surface that admits more than one distinct 
planar geodesic foliation.  Then, either $S$ is an open subset of a sphere,
or every point of $S$ has a neighbourhood $U$ that 
is an open subset of a cylinder,  $\gamma \times \real$, where $\gamma$ is a plane curve.
\end{theorem}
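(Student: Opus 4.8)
The plan is to reduce everything to the local normal form of Theorem~\ref{thm:geodesic}, together with the fact (noted just after Definition~\ref{defn1}) that the leaves of any planar geodesic foliation are lines of curvature. Write $\mathcal{F}_1,\mathcal{F}_2$ for the two given foliations. Since a foliation is determined by its tangent line field, and two distinct line fields differ on a nonempty open set, I would first localize to where $\mathcal{F}_1$ and $\mathcal{F}_2$ are transverse. Away from umbilic points the principal directions are unique, so each line-of-curvature foliation is tangent to one of the two principal line fields; on a connected piece of the non-umbilic set a continuous such foliation is therefore consistently the first or the second principal foliation. Hence, wherever $\mathcal{F}_1$ and $\mathcal{F}_2$ differ at a non-umbilic point, they are precisely the two (orthogonal) principal foliations, and \emph{both} families consist of planar geodesics.

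Next I would extract the curvature consequence. Choosing the Monge parameterization \eqref{mseqn} adapted to $\mathcal{F}_1$, its meridians are the $\mathcal{F}_1$-leaves and, by the remark following \eqref{metric}, the parallels are the other family of lines of curvature, i.e.\ the $\mathcal{F}_2$-leaves. The requirement that the parallels be geodesic in the metric \eqref{metric} is the single condition $\pd_u\alpha\equiv 0$, that is $x'(u)\kappa_1(v)-y'(u)\kappa_2(v)\equiv 0$. This identity forces one of two possibilities. Either $\kappa_1\equiv\kappa_2\equiv 0$, so that $\bbt'=0$, the frame $(\bbt,\bbq_1,\bbq_2)$ is constant, the directrix is a straight line, and $\bbx$ is a right cylinder $\gamma\times\real$ over the profile; or the profile $(x,y)$ degenerates to a straight line, in which case $\bbx(u,v)=\mathbf{c}(v)+u\,\mathbf{d}(v)$ is ruled and the residual relation forces the ruling direction $\mathbf{d}(v)$ to be constant (here the rotation-minimizing identities $\bbq_i'\parallel\bbt$ enter), again a cylinder. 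In particular $K=0$ at every non-umbilic point; since $K=k^2\ge 0$ at umbilic points, $K\ge 0$ on all of $S$ and the set $\{K>0\}$ consists of umbilic points and is open.

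It then remains to assemble the global dichotomy. Each connected component $\Omega$ of $\{K>0\}$ is a connected totally umbilic surface with $K>0$, hence an open piece of a round sphere on which $K$ is a positive constant; by continuity of $K$ that constant persists on $\overline{\Omega}\cap S$, so every boundary point of $\Omega$ would again lie in the open set $\{K>0\}$ and hence in $\Omega$, a contradiction. Thus $\Omega$ is both open and closed in $S$, and connectedness yields the clean alternative: if $\{K>0\}\ne\emptyset$ it is all of $S$ and $S$ is an open subset of a sphere, whereas if $\{K>0\}=\emptyset$ then $K\equiv 0$ and the computation above exhibits a cylinder neighbourhood of every point.

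I expect the main obstacle to be the bookkeeping at the umbilic locus, which is exactly what mediates between the two alternatives. One must verify that the transverse-versus-coincident behaviour of $\mathcal{F}_1$ and $\mathcal{F}_2$ cannot leave a region carrying two foliations that is neither spherical nor cylindrical — in effect, that the reasoning of the first two paragraphs really does govern a neighbourhood of every point — and, within the flat case, one must exclude genuine cones and tangent developables, which are flat but are not open subsets of any cylinder. Ruling these out requires imposing the full planarity and normal conditions of Definition~\ref{defn1} on the $\mathcal{F}_2$-leaves, not merely the geodesic condition, so that the residual constraint is seen to pin down $\mathbf{d}$ as constant. Handling these degeneracies carefully is what upgrades the pointwise statements into the stated global sphere-versus-local-cylinder dichotomy.
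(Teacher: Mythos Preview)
Your outline follows essentially the same route as the paper: at a non-umbilic point use the Monge parameterization of Theorem~\ref{thm:geodesic}, exploit that both foliations are principal and hence orthogonal there, deduce that the patch is a cylinder (so $K=0$), and then run the open--closed argument on the umbilic locus to obtain either a sphere or $K\equiv 0$ globally. The one genuine technical difference is in the non-umbilic step. The paper sets up \emph{two} Monge charts, one for each foliation, and compares the two expressions for $\dd s^2$ to force $(1-\alpha)^2\equiv 1$, hence $\dd s^2=\dd u^2+\dd v^2$, and then argues that the resulting developable has parallel rulings via the second Monge representation. You instead keep a single Monge chart adapted to $\mathcal F_1$ and impose directly that its parallels (the $\mathcal F_2$-leaves) be geodesic, which in the metric \eqref{metric} is exactly $\partial_u\alpha=0$; this is a little more direct and reaches the same dichotomy. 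One comment: your closing worry about needing the full Definition~\ref{defn1} conditions to rule out cones and tangent developables is unwarranted. In the ruled case $(x',y')=(c,d)$ constant, the geodesic condition alone already gives $c\kappa_1+d\kappa_2=0$, and since $\bbq_i'=-\kappa_i\bbt$ one has $\mathbf d'=c\,\bbq_1'+d\,\bbq_2'=-(c\kappa_1+d\kappa_2)\,\bbt=0$, so the ruling direction is constant without invoking planarity separately. What does remain, as you correctly anticipate, is the bookkeeping at flat umbilic points; the paper handles this by locating a nearby non-umbilic point inside a Monge neighbourhood and propagating the straight-line parallels across.
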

\begin{proof}
Suppose $S$ has two distinct foliations by planar geodesics, say $\mathcal{F}_1$ and 
$\mathcal{F}_2$. Let  $p$ be a point of $S$, and suppose first that 
$p$ is not an umbilical point. By Theorem \ref{thm:geodesic},
there is a neighbourhood $U_1$ of $p$ such that $U_1$ is parameterized as a Monge surface
$\bbx(u,v)$, where the geodesics are given by $v=\hbox{constant}$, and an analogous 
neighbourhood $U_2$ for $\mathcal{F}_2$, parameterized by $\tilde \bbx(\tilde u, \tilde v)$,
where the planar geodesics are $\tilde v = \hbox{constant}$. 
 Set $U = U_1 \cap U_2$.
We can assume that $U$ is small enough so that it contains no 
 umbilic points. Then both $(u,v)$ and $(\tilde u, \tilde v)$ are
curvature line coordinates on $U$. Since the lines of curvature through a non-umbilic point are unique,
and  the planar geodesic foliations are assumed to be
distinct, it follows that the $u$ isocurves are the $\tilde v$ isocurves and vice versa, and we have
$(\tilde u(u,v), \tilde v(u,v)) = (\tilde u(v), \tilde v(u))$.
Moreover, all the $u$ isocurves are congruent to each other, and all the $v$ isocurves are congruent to each other.
We can assume that both sets of  coordinates  are chosen such 
that both the profile curve and the spine curve are unit speed.
From \eqref{metric} and the relation between the variables we have
 \[
\dd s^2 = \dd u^2 + (1-\alpha(u,v))^2\dd v^2 =
   (1-\tilde \alpha)^2 \left(\frac{\partial \tilde v}{\partial u}\right)^2 \dd u^2 
	+ \left(\frac{\partial \tilde u}{\partial v}\right)^2 \dd v^2,
	\]
	where $\partial \tilde u/\partial v$ depends only on $v$.  Hence $(1-\alpha)^2$ depends only on $v$,
	and is constant equal to $1$, given that the spine curve $\bbr (v)$ is unit speed. 
	Thus, $\dd s^2 = \dd u^2 + \dd v^2$,
 which implies that
the surface is developable.  A developable surface is locally a ruled surface, and, at non-umbilical
points, the rulings are
lines of curvature. Thus one of the families of planar geodesics consists of 
straight line segments. Since these line segments are also \emph{parallel}, due to the second Monge surface
representation, the surface $U$ is a  cylinder.

Now suppose that $p$ is an umbilical point where the Gaussian curvature $K(p)$ at $p$ is non-zero:
then $K$ must
be non-zero on a neighbourhood $W$ of $p$.  By the discussion above,  all of the points in $W$ must be umbilic,
because non-umbilic points have Gaussian curvature zero. Hence $W$ is part of a sphere, with constant
curvature $K_0$.  Moreover, the set of points at which the Gauss curvature is $K_0$ is clearly
both open and closed,
hence the whole of $S$.    

The remaining case is that $p$ is an umbilic and $K(p)=0$.  Since $S$ is not part of a sphere, there can 
(by the above) be
no umbilic points with positive Gaussian curvature, and hence the Gauss curvature must be zero everywhere.
Choose a neighbourhood $U$ of $p$ that has a Monge surface representation $\bbx (u,v)$. Since $K=0$,
we can assume that $U$ is a part of a ruled surface. Then either $U$ is a part of a plane and
the proof is complete, or there exists a point $q$ in $U$ which is not an umbilic point. In the latter case,
as previously discussed, the rulings, on a neighbourhood of $q$, are parallels of the Monge representation.
Since a parallel curve to a straight line segment is also a straight line segment, it follows that all of the parallels
on $U$ are parallel straight line segments, and $U$ is a cylinder.
\end{proof}

\subsection{Global planar geodesic foliations}
We now consider global planar geodesically foliated surfaces.
In general, a PGF surface does not have a global representation as a Monge surface. Figure \ref{fig2}
shows that it is easy to construct counterexamples: the developable surface to the left, although orientable,
 does not have a well-defined vector-field orthogonal to the foliation, whereas a Monge surface
does, given by the direction of the parallels.    The surface to the right has a different profile curve on 
one end of the surface from the other.
\begin{figure}[h!tbp]
	\begin{center}
	$
	\begin{array}{cc}
	\includegraphics[height=30mm]{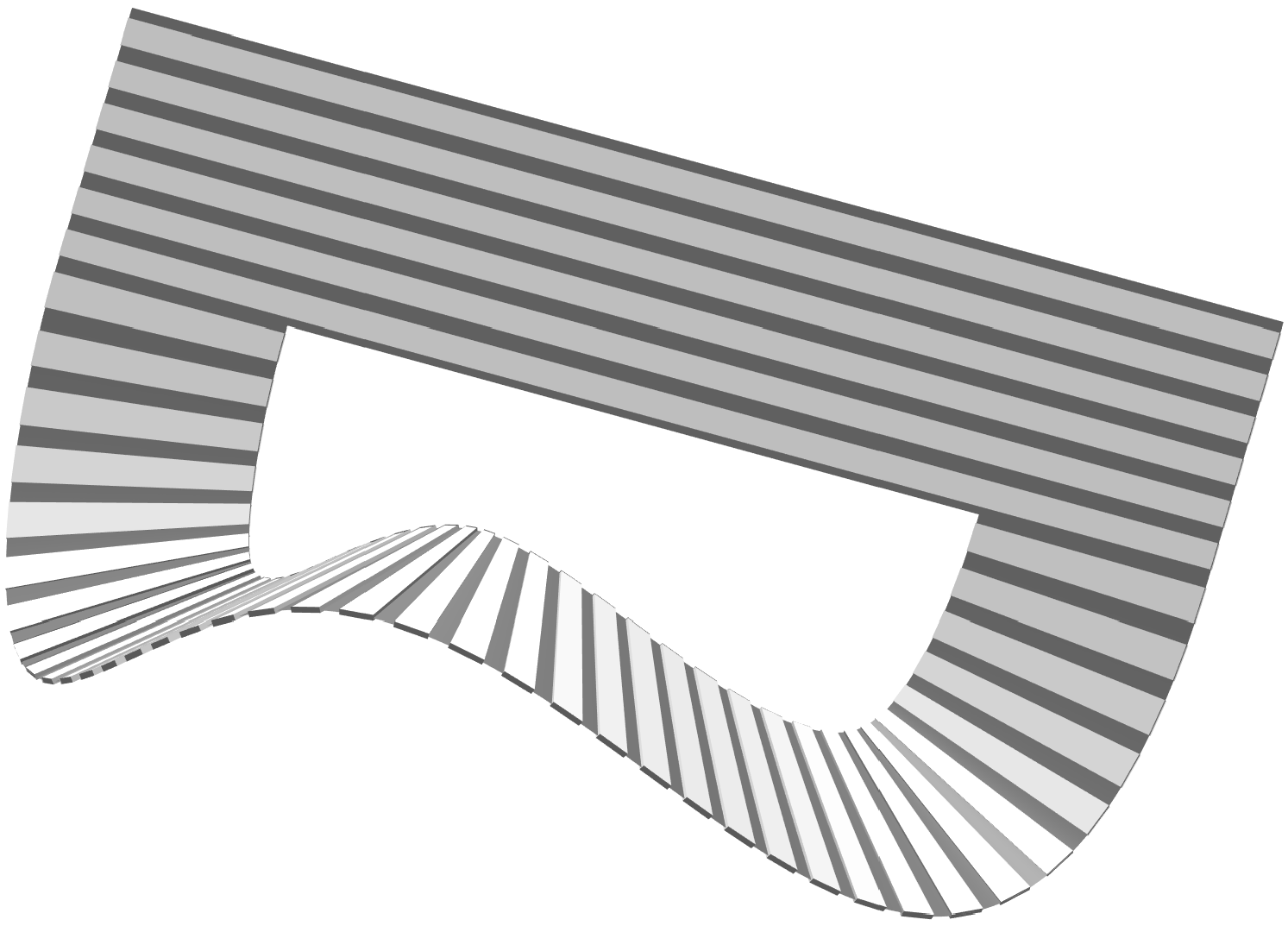}    \quad & \quad
											\includegraphics[height=30mm]{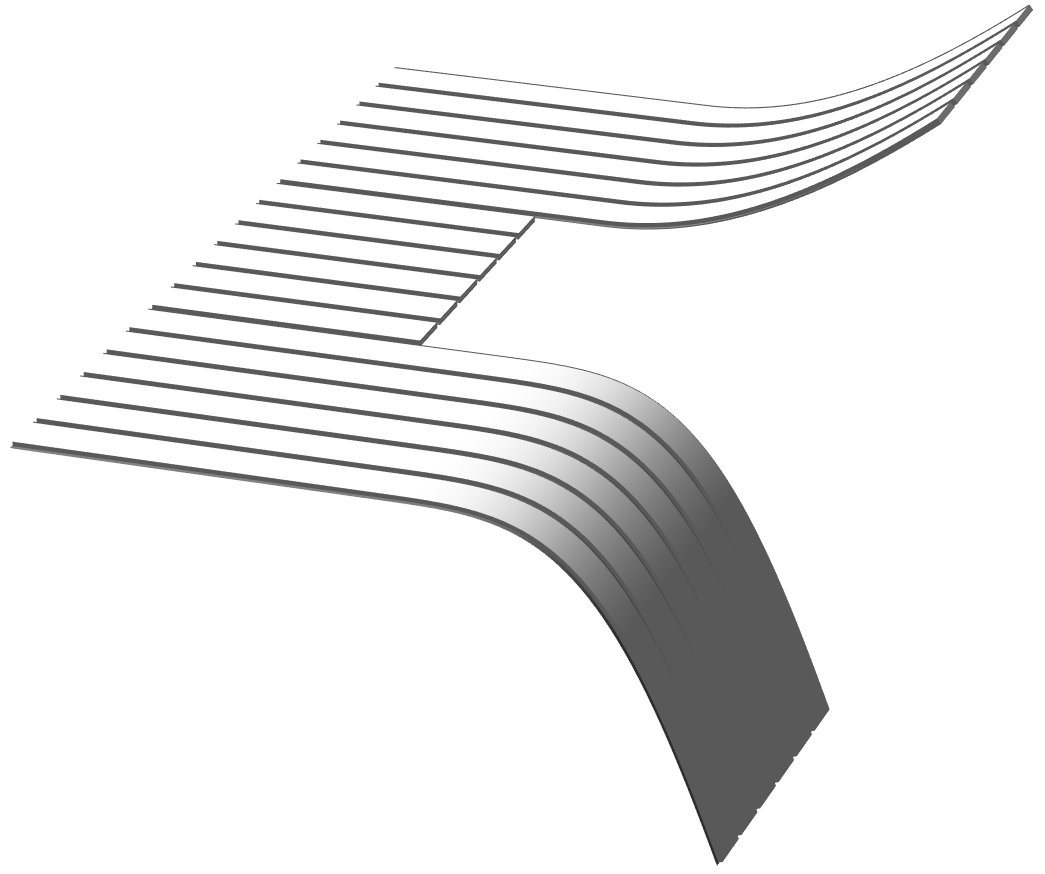}  	
\end{array}
	$
		\end{center}
	\caption{PGF surfaces that are not globally Monge surfaces..}
	\label{fig2}
\end{figure}

 Neither of the PGF surfaces in Figure \ref{fig2}  is \emph{complete}.
If we add this requirement, then such examples are ruled out:
\begin{theorem} \label{thm2}
Let $S$ be a  \emph{complete, connected, orientable} PGF surface. Then $S$ has a global parameterization
 in the form \eqref{mseqn}.
In particular, this parameterization is a covering map $\real^2 \to S$ consisting of curvature line coordinates.
\end{theorem}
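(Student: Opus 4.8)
The plan is to realize \eqref{mseqn} as the normal exponential map of a single transversal curve (the spine) and then to recognize this map as a covering. Since $S$ is orientable, I first fix a global unit normal $N$; the foliation then supplies a smooth line field $L_1$ tangent to its leaves (the planar geodesics), and I take $L_2=L_1^\perp$, the smooth line field of the \emph{parallels}. Fix $p_0\in S$ and let $c(v)$ be the maximal unit-speed integral curve of $L_2$ with $c(0)=p_0$. I claim $c$ is defined for all $v\in\real$: a unit-speed curve on a bounded interval has finite length, so by metric completeness of $S$ its endpoint exists in $S$ and the ODE continues, whence $v\in\real$. Letting $\eta(v)$ be a continuous unit section of $L_1$ along $c$, I set
\[ \Phi(u,v)=\exp_{c(v)}\!\bigl(u\,\eta(v)\bigr). \]
Since $S$ is geodesically complete, $\Phi$ is defined on all of $\real^2$, and the geodesic $u\mapsto\Phi(u,v)$ is the unique geodesic through $c(v)$ tangent to $L_1$, i.e.\ the foliation leaf through $c(v)$; so horizontal lines of $\real^2$ are carried onto the leaves.

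Next I would verify that $\Phi$ is a local diffeomorphism of the stated form. By the Gauss lemma, $\partial_v\Phi$ stays orthogonal to the leaf direction $\partial_u\Phi$ for every $u$; and since a geodesic stays on the leaf it starts on, the wavefront $v\mapsto\Phi(u_0,v)$ crosses the leaves at the same nonzero rate as $c$, so $\partial_v\Phi\neq0$ and no focal points occur. (Equivalently, near any point Theorem \ref{thm:geodesic} gives a genuine Monge parameterization with which $\Phi$ agrees up to a parameter diffeomorphism, hence $\Phi$ is an immersion.) To obtain the global form \eqref{mseqn}, I would cover a compact arc $c([0,v])$ of the spine by finitely many Monge charts from Theorem \ref{thm:geodesic}; within each chart all leaves are congruent to one profile curve, and on overlaps these profiles agree up to a $u$-translation, which the normalization $\Phi(0,v)=c(v)$ (so $x(0)=y(0)=0$ and $\bbr=c$) removes. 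By connectedness of the $v$-line this yields a single unit-speed profile $(x(u),y(u))$, $u\in\real$, together with one spine $\bbr(v)$ and its rotation-minimizing frame $(\bbt,\bbq_1,\bbq_2)$, single-valued because of the global normal $N$, so that $\Phi$ has the form \eqref{mseqn}. Finally $\Phi$ is open, and its image is also closed since every point of $S$ is reached by moving along a leaf and then along $L_2$; hence $\Phi$ is onto the connected $S$.

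It remains to prove $\Phi$ is a covering. Equip $\real^2$ with $g_0:=\Phi^{*}(\dd s^2)=\dd u^2+(1-\alpha)^2\,\dd v^2$, with $\alpha$ as in \eqref{metric}; then $\Phi$ is a surjective local isometry onto a connected surface, and by the classical theorem that a local isometry out of a \emph{complete} Riemannian manifold is a covering map, it suffices to show $(\real^2,g_0)$ is complete. This is the heart of the matter and the step I expect to be the main obstacle. The reduction is clean: a finite-$g_0$-length curve running to infinity must satisfy $\int|\dd u|<\infty$, hence have bounded $u$, say $|u|\le C$, and must escape with $v\to\pm\infty$ while $\int(1-\alpha)\,|\dd v|<\infty$ — that is, it would force a \emph{parallel} to have finite length. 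So everything comes down to a uniform positive lower bound for $1-\alpha$ on each strip $\{|u|\le C\}$, equivalently to the parallels of the global Monge representation being complete. This is exactly where completeness of $S$ must enter essentially, and it is precisely what fails for the incomplete examples of Figure \ref{fig2}; the plan is to extract the lower bound from completeness of $S$ by controlling a bounded-width geodesic tube about the spine, thereby ruling out the degeneration $1-\alpha\to0$ along a divergent sequence of bounded $u$-height.
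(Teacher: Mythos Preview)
Your construction is essentially the paper's: integrate the unit conormal field to obtain a spine $c=\bbr$, then sweep the leaf through $c(v)$ to define a global map of the form \eqref{mseqn}, and argue that this map is onto and a covering.  Two points deserve comment.

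\textbf{Surjectivity.}  Your ``open and closed'' claim is not substantiated: you assert that every point of $S$ can be reached from the image by moving along a leaf and then along $L_2$, but that is precisely what must be shown, and it is not obvious that the image is closed.  The paper handles this differently.  For an arbitrary $q\in S$ it builds a second Monge patch $\hat S$ through $q$ by the same recipe; if $\hat S$ meets $\tilde S$ they share an entire leaf, and then the spine of $\hat S$ coincides (as an integral curve of the conormal) with a parallel of $\tilde S$, forcing $\hat S=\tilde S$.  Thus the patches $\{\hat S\}$ are open, cover $S$, and are pairwise equal or disjoint; connectedness of $S$ leaves only one.  You could adopt this argument verbatim.

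\textbf{Covering.}  Here you are more careful than the paper, which simply writes ``regular immersion, hence a covering map'' after establishing surjectivity.  You are right that a surjective local diffeomorphism need not be a covering, and your proposed route---pull back the metric and invoke the theorem that a local isometry out of a complete manifold is a covering---is the standard remedy.  Your reduction of completeness of $(\real^2,g_0)$ to a lower bound for $1-\alpha$ on each strip $|u|\le C$ is correct.  The paper does not carry out this step at all, so on this point your proposal goes beyond the paper rather than falling short of it; the remaining task is to turn the heuristic ``control a bounded-width tube about the spine'' into an actual bound, for instance by using that the parallel $v\mapsto\Phi(u_0,v)$ is a reparametrization of a complete integral curve of $L_2$ on $S$.
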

\begin{proof}
Let $\gamma_0$ be a choice of one of the planar geodesics in the foliation, with a given arc-length
parameterization. Denote the surface normal by $N$ and
 let $\nu=N \times \gamma_0^\prime$ denote the unit conormal along $\gamma_0$.
Such a conormal vector field
is defined along any of the geodesics in the foliation, and we can always assume that, on neighbouring
geodesics, the  parameterizations are chosen so that $\nu$ varies continuously.
Hence, given a point $p_0 = \gamma_0(s_0)$, we can integrate the vector field $\nu$ to obtain a curve $\bbr$
in $S$, orthogonal to the foliation.
 Since $S$ is complete and $\nu$ has unit length, $\bbr$ is well-defined $\real \to S$. 
Let $\tilde S$ be the Monge surface constructed from $\bbr$ and $\gamma_0$ by the equation
 \eqref{mseqn}.  This Monge surface is regular because a Monge surface is always regular on an open
set containing the spine curve, and the completeness of $S$, together with uniquenss of geodesics,
means that $\tilde S$ is a subset of
the regular surface $S$.
Now $\tilde S$ is open in $S$, because any point on $\tilde S$ has a neighbourhood
parameterized in the form \eqref{mseqn} with
 $(u,v) \in (u_0-\varepsilon, u_0 + \varepsilon) \times (v_0-\delta, v_0+\delta)$.
To show that $\tilde S$ is the whole of $S$, consider an arbitrary point
$q \in S$.  Choosing the planar geodesic of the foliation that passes through $q$, we can 
similarly construct another Monge surface $\hat S$ that contains $q$. If $\hat S$ intersects $\tilde S$,
then there is a planar geodesic that is contained in both $\hat S$ and $\tilde S$. In this
case, it follows by the construction of these sets that $\hat S =\tilde S$. 
Thus $\hat S$ and $\tilde S$ are either equal or disjoint. But both sets are open 
and  $S$ is connected, so they must be equal, and $q \in S$.
By construction, the representation $\tilde S$ is a regular  immersion, hence a 
covering map $\real^2 \to S$.
\end{proof}
Since the $(u,v)$ coordinates of the covering map in Theorem \ref{thm2} are canonically defined
by the planar geodesic foliation,
up to coordinate changes of the form $(u,v) \mapsto (\tilde u(u), \tilde v(v))$, 
 this can be regarded as a canonical covering
by the conformal Lorentz plane $\real^{1,1}$. Hence:
\begin{corollary}
A complete, connected, orientable PGF surface  carries a Lorentzian conformal structure,
the null lines of which are given by the planar geodesics and the family of parallels of a Monge
surface parameterization.
\end{corollary}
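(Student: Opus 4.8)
The plan is to obtain the conformal structure directly from the two transverse foliations already present on $S$, and then to check that the covering of Theorem \ref{thm2} exhibits it in standard null-coordinate form. Recall that in dimension two a Lorentzian conformal structure is precisely the data of a field of light cones, i.e.\ a pair of transverse tangent lines at each point; fixing such a pair is equivalent to prescribing the two null foliations.

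On a PGF surface $S$ we have exactly such data: the planar geodesic foliation, and --- via the Monge surface structure of Theorem \ref{thm:geodesic} --- the transverse foliation by parallels (the second family of curvature lines). First I would note that these two foliations are everywhere transverse, since in any local Monge parameterization \eqref{mseqn} the tangents $\pd\bbx/\pd u$ and $\pd\bbx/\pd v$ are linearly independent. Declaring both families null therefore defines a Lorentzian conformal structure directly on $S$, whose null lines are, by construction, the planar geodesics and the parallels.

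To connect this with $\real^{1,1}$, I would use the global coordinates $(u,v)$ from Theorem \ref{thm2}. In these coordinates the structure is represented by the Lorentzian metric $g_0 = \dd u\,\dd v$ of signature $(1,1)$, whose null directions are exactly $\pd_u$ and $\pd_v$. The remaining point is invariance: the coordinates are canonical up to changes $(u,v)\mapsto(\tilde u(u),\tilde v(v))$, and under any such change $\dd\tilde u\,\dd\tilde v = \tilde u'(u)\,\tilde v'(v)\,\dd u\,\dd v$ is a nowhere-zero scalar multiple of $g_0$. Hence the conformal class of $g_0$ --- equivalently the null-cone field --- is well defined and descends to $S$, and the covering map $\real^2 \to S$ is conformal for the flat metric $\dd u\,\dd v$ on the Lorentz plane $\real^{1,1}$. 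Note that this Lorentzian structure is genuinely distinct from the induced Riemannian metric \eqref{metric}, which has the same coordinate curves but is of Riemannian signature.

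I expect no analytic obstacle here; the content is entirely the observation that two transverse foliations encode a Lorentzian conformal class, together with the one-line check that the admissible coordinate changes rescale $\dd u\,\dd v$ rather than mix the two null directions. The only thing to be careful about is that the coordinate changes are restricted to the form $(\tilde u(u),\tilde v(v))$ --- precisely the maps preserving each foliation separately --- which is exactly what Theorem \ref{thm2} and the preceding remark guarantee.
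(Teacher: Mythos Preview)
Your proposal is correct and follows essentially the same approach as the paper: the paper's entire argument is the sentence preceding the corollary, observing that the $(u,v)$ coordinates from Theorem~\ref{thm2} are canonical up to changes $(u,v)\mapsto(\tilde u(u),\tilde v(v))$, hence give a conformal covering by $\real^{1,1}$. You have simply unpacked this by recalling that a two-dimensional Lorentzian conformal structure is the same as a pair of transverse null foliations, and by checking explicitly that $\dd u\,\dd v$ transforms by a nonzero scalar under the admissible coordinate changes --- more detail than the paper gives, but the same idea.
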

Note that not all planar geodesic foliations carry a global Lorentz structure (see
Figure \ref{fig2}, left).

The Poincar\'e index theorem states that on a compact surface the Euler characteristic is
equal to the sum of the indices of the zeros of any vector field.  Since any foliation gives
a natural non-vanishing vector field, namely the tangent field to the foliation, this implies:
\begin{proposition}  \label{toriprop}
If $M$ is a compact surface and $\bbx : M \to \real^3$ a PGF immersion, then $M$ is a torus.
\end{proposition}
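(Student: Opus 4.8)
The plan is to extract from the planar geodesic foliation a nowhere-zero tangent field on $M$ and then apply the Poincaré--Hopf index theorem, exactly as the sentence preceding the statement already indicates. The foliation assigns to each point $p\in M$ the tangent line $L_p\subset T_pM$ of the leaf through $p$, and a unit generator of $L_p$ gives, \emph{locally}, a unit tangent field $X$ with no zeros. The analysis in the proof of Theorem \ref{thm2} shows that such a choice can be made to vary continuously along any transversal: this is precisely how the conormal $\nu = N\times X$ and the spine curve are built there. The one genuinely global point is whether these local choices assemble into a single-valued $X$ on all of $M$, i.e. whether the line field is orientable. If it is, $X$ is defined globally; if not, I would pass to the orientation double cover $\pi:\tilde M\to M$ of the line field, on which the lifted field becomes a genuine nowhere-zero vector field.

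With a nowhere-zero field in hand, Poincaré--Hopf gives $\chi(M)=\sum \operatorname{index}=0$ immediately, since there are no zeros to contribute. In the double-cover variant one instead gets $\chi(\tilde M)=0$, and because $\chi(\tilde M)=k\,\chi(M)$ with $k\in\{1,2\}$ one again concludes $\chi(M)=0$. This is the entire analytic content of the statement; the index computation is trivial once the field is known to exist, and the immersion and geodesic/planarity hypotheses enter only to guarantee that the foliation, hence the line field, is present and continuous.

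It remains to convert $\chi(M)=0$ into the topological identification. Among closed surfaces, only the torus and the Klein bottle have vanishing Euler characteristic, so $M$ is one of these, and the orientable one is the torus. I expect the real obstacle to lie precisely in this orientability bookkeeping---both in producing a single-valued vector field from what is \emph{a priori} only a tangent line field, and in excluding the non-orientable Klein bottle to single out the torus---rather than in the index theorem, which is used here as a black box.
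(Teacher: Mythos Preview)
Your approach is essentially identical to the paper's: the sentence immediately preceding the proposition \emph{is} the entire argument offered there, namely Poincar\'e--Hopf applied to the tangent field of the foliation. Your additional care in distinguishing a line field from a vector field, and your flagging of the orientability hypothesis needed to exclude the Klein bottle, are well-founded refinements that the paper glosses over---indeed the paper later constructs Monge Klein bottles, so the proposition as stated tacitly assumes $M$ orientable, as the abstract makes explicit.
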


\section{Generalized Monge surfaces}  \label{mongesection}
We have shown that a PGF surface is not necessarily a Monge surface, unless it is complete.
In this section we explore the converse direction,  in order to
clarify the precise difference between Monge surfaces and PGF surfaces,
and find global conditions under which these objects coincide.

The idea of a Monge surface is defined kinematically by a plane $\Pi(t)$ moving through space
such that the movement of any point on the surface is always orthogonal to the plane.
We can describe such a moving plane by making an initial choice $Q_1$, $Q_2$ of orthonormal
basis for the plane, and  setting $\bbq_i(t)$ to be the corresponding vectors at time $t$. Similarly,
making an initial choice of point $p_0$ on the plane, we let $p(t)$ be the corresponding point at time $t$. 
Finally, we set $\bbt(t)$ to be the normal to the plane at time $t$.  By the 
definition of $\Pi(t)$, the derivatives of $p$, $\bbq_1$
and $\bbq_2$ have components only in the direction of $\bbt$. Thus we have a map $F: (a,b) \to ASO(3)$,
from some open interval into the group of orientation preserving Euclidean motions, given by
\[
F = \left( \begin{array} {cccc} \bbq_1 & \bbq_2 & \bbt & p \\
           0 & 0 & 0 & 1 \end{array} \right),
\]
and the conditions on the derivatives are represented by
\[
\omega = F^{-1} \frac{\dd F}{\dd t} =  \left( \begin{array} {cccc} 
  0 & 0 & -\kappa_1 & 0 \\
	0 & 0 & -\kappa_2 & 0 \\
	\kappa_1 & \kappa_2 & 0 & \lambda \\
	0 & 0 & 0 & 0 \end{array} \right).
\]
We call the family of planes $\Pi(t)$, defined by $F$ an \emph{orthogonal family of planes},
and say that the family is \emph{regular} if $(\kappa_1(t), \kappa_2(t), \lambda(t)) \neq (0,0,0)$, i.e., $\omega \neq 0$,
 for all $t \in (a,b)$.
This definition of regularity does not depend on the 
 choice of initial vectors $Q_1$ and $Q_2$ or the initial point $p_0$. A different choice amounts to post-multiplying the 
frame $F$ by a constant invertible matrix , and thus $\omega$ is conjugated by a constant invertible matrix,
which does not change the condition $\omega \neq 0$.

\begin{example}  \label{example1}
Given an arbitrary regular space curve $\bbr$, a regular orthogonal family of planes can be constructed by
$p(t)=\bbr(t)$, $\bbt = \bbr^\prime/|\bbr^\prime|$ and $\bbq_1$, $\bbq_2$ a rotation minimizing frame.
In that case $\lambda$ is non-vanishing.  More generally, there exist
regular orthogonal families of planes where  $\lambda$ vanishes, regardless of the choice of
initial point $p_0$. For example, take 
\[
\bbq_1(t) = (\cos t, \sin t, 0), \quad \bbq_2(t) = (0,0,1), \quad \bbt(t) = (-\sin t, \cos t, 0), \quad p(t) = \bbt(t)-t \bbt^\prime(t).
\]
This is a regular orthogonal family of planes, with $\kappa_1(t)=-1$, $\kappa_2=0$ and $\lambda(t) = t$.
Here $\lambda(t)$ vanishes at $t=0$. If we choose a different initial point $\tilde p_0$ on the plane, 
we have $\tilde p(t) = p(t) + c_1 \bbq_1(t) + c_2 \bbq_2(t)$ for some constants $c_1$ and $c_2$. Then 
$\tilde p^\prime(t) = (t+c_1) \bbt(t)$, so $\lambda(t)$ vanishes at $t=-c_1$.
\end{example} 

\begin{definition}
Let $\gamma:(a,b) \to \real^2$ be a curve parameterized by arc-length, $\gamma(s) = (x(s),y(s))$,
  and let $F: (c,d) \to ASO(3)$ be a regular 
orthogonal family of planes, with notation as above. Then the map $\bbx: (a,b) \times (c,d) \to \real^3$ given by
\[
\bbx(u,v) = p(v) + x(u) \bbq_1(v) + y(u) \bbq_2(v),
\]
is called a \emph{(generalized) Monge surface}.
\end{definition}
It should be noted that, if the initial point $p_0$ and the initial frame $(Q_1, Q_2)$ are changed in the representation
$F$ for the family $\Pi(t)$, then the curve $\gamma$ would need to be translated and rotated accordingly to 
get the same surface.

Since
\[
\frac{\partial \bbx}{\partial u} = x^\prime(u) \bbq_1(v) + y^\prime(u) \bbq_2(v), \quad \quad
\frac{ \partial \bbx}{\partial v} = (\lambda(v) - \kappa_1(v) x(u) - \kappa_2(v) y(u) )\bbt,
\]
with $(x^\prime)^2 +(y^\prime)^2=1$, a Monge surface is regular at $(u,v)$ if and only if 
\[
\lambda(v) \neq \kappa_1(v) x(u) + \kappa_2(v) y(u).
\]
Geometrically, in each plane $\Pi(v)$, there is a (possibly empty) line, the instantaneous axis of rotation of the
plane $\Pi(v)$, 
\[
L_v = \{(x,y) ~|~ \kappa_1(v) x + \kappa_2(v) y -\lambda(v) = 0 \},
\]
and the Monge surface is regular if and only if none of the lines $L_v$  intersect the
initial curve $\gamma$.  Note that the spine curve $p$ is a curve on the surface $\bbx$ if and only if $(x(u), y(u))=(0,0)$ for some
$u$.  One can always arrange that $p$ is a curve on the surface by choosing $p_0$ to be a point on the surface, and
then translating $\gamma$ accordingly.

\begin{figure}[h!tbp]
	\begin{center}
	$
	\begin{array}{ccc}
	\includegraphics[height=25mm]{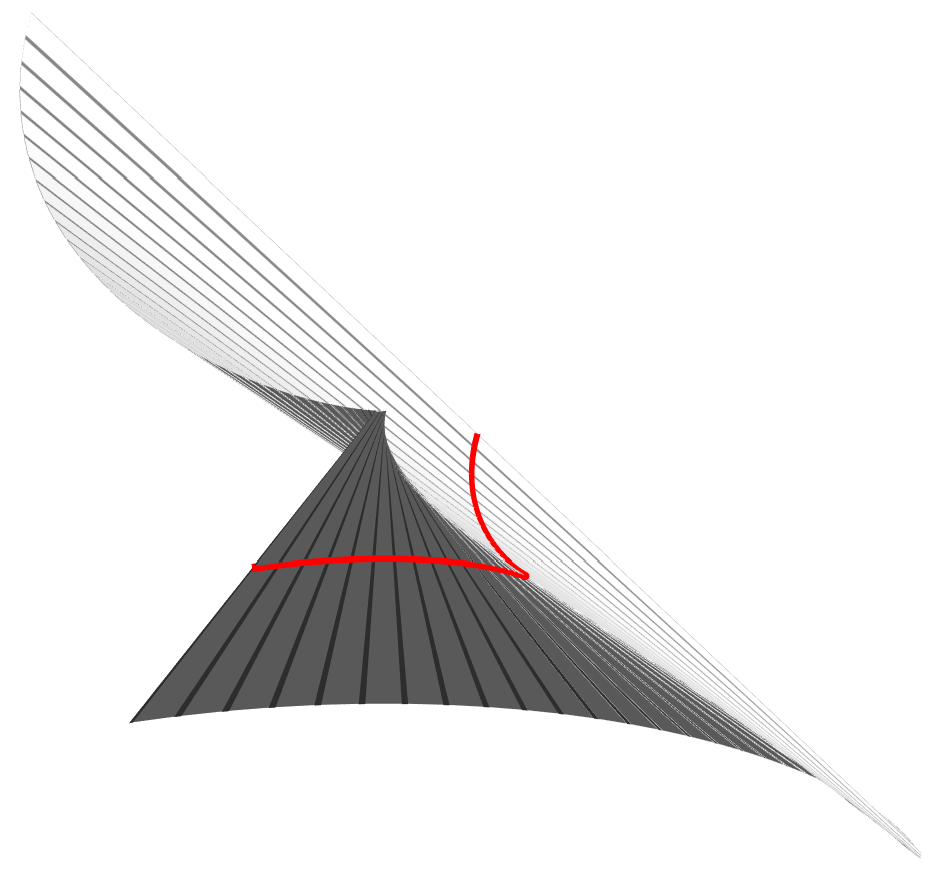}   \quad & 
											\includegraphics[height=25mm]{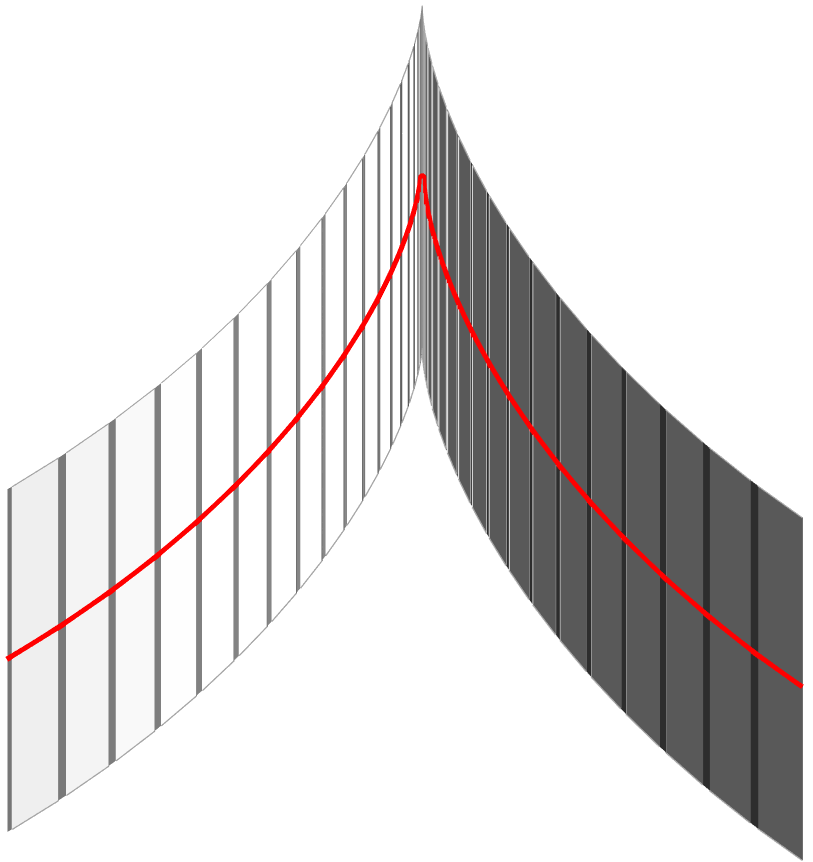}  	& \quad
											\includegraphics[height=25mm]{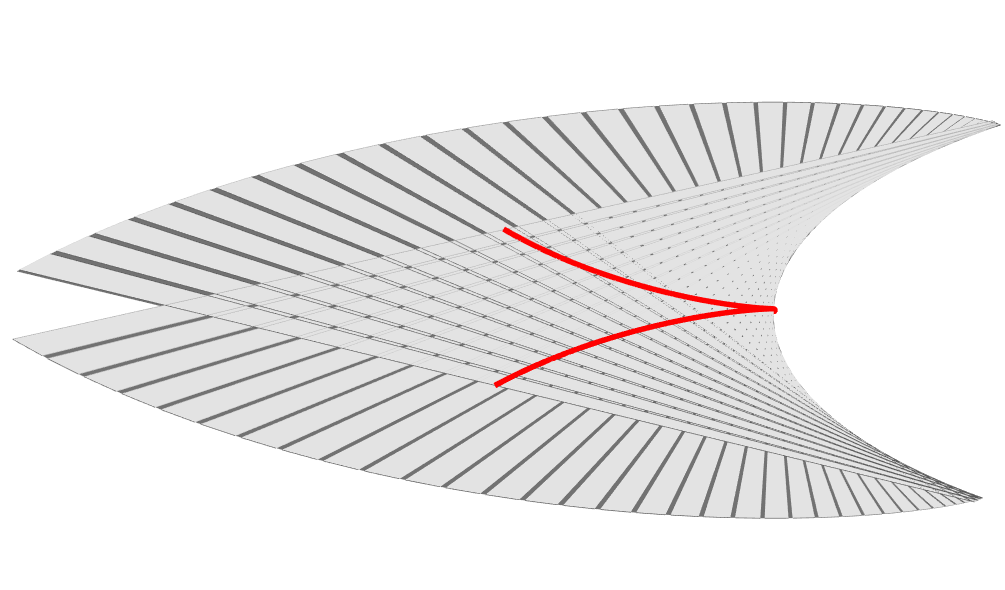}  
\end{array}
	$
		\end{center}
	\caption{Monge surfaces with the same singular spine curve, shown in red.}
	\label{fig:singularities}
\end{figure}

\begin{example} 
Returning to the family of planes in Example \ref{example1}, we have $L_v$ given by 
the equation $x=v$. That is, the instantaneous axes of rotation are all vertical lines,
and these lines  sweep out the entire $xy$-plane as $v$ runs over $\real$.  Hence there is no choice of profile curve $\gamma$ 
such that $\bbx$ is everywhere a regular surface.
Portions of the surfaces corresponding to three different lines as profile curve (the line $y=-x$, the $x$-axis
and the $y$-axis) are shown in Figure \ref{fig:singularities}. The first two have cuspidal edges, the last a fold singularity.
\end{example}

Note that the example of the sphere shows that the image of a generalized Monge surface can  be a regular
surface even if the parameterization $\bbx$ is not regular.  In general, however,
there will  be singularities that are not removable, as in Figure \ref{fig:singularities}.

For a given generalized Monge surface, if the surface is regular then, by construction, it is a PGF surface.
Combined with  Theorem \ref{thm2}, we conclude:
\begin{theorem}
Every regular Monge surface is a PGF surface, and every complete PGF surface
has a covering by a Monge surface.
In particular, \emph{complete, regular} PGF surfaces are the same objects as complete regular Monge surfaces.
\end{theorem}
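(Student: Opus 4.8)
The plan is to treat the statement as three linked claims and to lean on the two results already established: the local Monge form of Theorem~\ref{thm:geodesic} and the global covering of Theorem~\ref{thm2}. First I would verify the ``by construction'' part, that a \emph{regular} generalized Monge surface $\bbx(u,v)=p(v)+x(u)\,\bbq_1(v)+y(u)\,\bbq_2(v)$, with $(x')^2+(y')^2=1$, is a PGF surface. Since $\bbx$ is an immersion, $\partial\bbx/\partial u$ is a nonvanishing tangent field whose integral curves are the $v=\text{const}$ meridians, so these foliate $S$; each lies in the plane $\Pi(v)$ through $p(v)$ spanned by $\bbq_1(v),\bbq_2(v)$, and is unit speed by the normalization. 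The only thing to check is the condition of Definition~\ref{defn1}. From the displayed derivatives preceding the theorem, $\partial\bbx/\partial v$ is a multiple of $\bbt$, nonzero by regularity, so $\bbt$ is tangent to the surface; hence the unit normal $N$, being orthogonal to $\bbt$, lies in $\Pi(v)$, which is exactly the normal-in-plane condition. For the geodesic property, the meridian's acceleration $x''\bbq_1+y''\bbq_2$ lies in $\Pi(v)$ and is orthogonal to the unit tangent $x'\bbq_1+y'\bbq_2$; since $N$ also lies in $\Pi(v)$ and is orthogonal to that tangent, the acceleration is parallel to $N$, so the geodesic curvature vanishes. (Alternatively one can quote the fact recorded after \eqref{metric} that $(u,v)$ are curvature-line coordinates and the meridians are normal sections.) This settles the first claim.

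For the second claim I would reduce to Theorem~\ref{thm2}. If $S$ is complete, connected and orientable, the covering $\real^2\to S$ of the required form \eqref{mseqn} is precisely the content of Theorem~\ref{thm2}; the disconnected case is handled componentwise. The orientability hypothesis is removed by passing to the orientation double cover $\pi\colon\hat S\to S$, which is complete, connected and orientable; because the planar, geodesic and normal-in-plane conditions are all local, the PGF foliation lifts to a PGF foliation of $\hat S$. Applying Theorem~\ref{thm2} to $\hat S$ gives a Monge covering $\real^2\to\hat S$, and composing with $\pi$ yields a covering $\real^2\to S$. The composite is still of Monge form, since $\pi$ is a local isometry realizing $\hat S$ as an immersed surface in $\real^3$ agreeing locally with $S$, so the composite map into $\real^3$ retains the parameterization \eqref{mseqn}.

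The third claim then follows by combining the first two in the complete, regular setting, and the only real care needed is in reading ``the same objects'' correctly: a complete regular Monge surface is, by the first claim, a complete regular PGF surface, while by the second a complete regular PGF surface is covered by a Monge surface whose total space $\real^2$ — with the pulled-back complete metric and the immersion \eqref{mseqn} — is itself simultaneously a complete regular Monge surface and a complete regular PGF surface, so the two classes coincide up to this canonical covering. The main obstacle is not any single hard computation but the orientability gap between Theorem~\ref{thm2} and the present statement: one must check that the orientation double cover inherits the PGF structure and that the resulting composite covering still has the global Monge form \eqref{mseqn}, and one must state precisely the sense in which a complete PGF surface, which Theorem~\ref{thm2} exhibits only as covered by $\real^2$, is ``the same'' as a Monge surface.
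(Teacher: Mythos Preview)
Your approach is the paper's approach: the paper's entire justification is the sentence ``if the surface is regular then, by construction, it is a PGF surface'' followed by ``Combined with Theorem~\ref{thm2}, we conclude.'' You have simply unpacked the ``by construction'' step by verifying Definition~\ref{defn1} directly, and you invoke Theorem~\ref{thm2} for the covering claim just as the paper does.

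The one genuine addition in your write-up is that you noticed the orientability hypothesis in Theorem~\ref{thm2} is absent from the present statement, and you patched this via the orientation double cover. The paper's one-line argument passes over this silently. Your fix is the natural one and works: the PGF foliation lifts because the defining conditions are local, Theorem~\ref{thm2} applies to the cover, and the resulting map $\real^2\to\real^3$ is literally the same immersion whether factored through $\hat S$ or through $S$, so it retains the form~\eqref{mseqn}.
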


\section{Monge  tori and Klein bottles}
In this section we investigate the problem of constructing \emph{Monge tori}. 
These are complete, regular surfaces, so can be regarded as either Monge surfaces or PGF surfaces.
If the spine curve 
is a \emph{closed} curve, i.e., a map $\bbr: \real \to \real^3$,
satisfying  $\bbr(v+P) = \bbr(v)$ for some $P$,
it does not follow that the surface closes up to form a topological cylinder.
Apart from very special cases such as a \emph{tubular surface} (Figure \ref{fig:tori}, right),  it is
necessary that the
surface normal, and hence the rotation minimizing frame, be periodic with the same period.
This is a property of the geometry of the spine curve, and it has a very simple characterization for curves
that have a well-defined Frenet frame. 

\subsection{Curves with closed rotation minimizing frames and Monge tori}
 Throughout this section, by a differentiable curve $\bbr: I \to \real^3$,
we mean a regular unit-speed curve that has a well-defined differential binormal field
$\bbb$ along the curve.  The principal normal is then defined by 
the formula $\bbn = \bbb \times \bbt$,
where $\bbt= \bbr^\prime(s)$.  The signed curvature is 
given by $\bbt^\prime(s) = \kappa(s) \bbn(s)$, and the torsion by $\bbb^\prime(s) = \tau(s) \bbn(s)$.
  Given a closed curve with period $L$, denote the total torsion
by $T= \int_0^L \tau(s) \dd s$. Note that the assumed binormal field
is not automatically periodic: for example a closed curve could contain pieces of 
line segments, along which the binormal field could be anything at all. We therefore
add the assumption of periodicity of the binormal field:
\begin{lemma}  \label{closedlemma}
Let $\bbr:\real \to \real^3$ be a closed, differentiable curve with period $L$,
and suppose that the binormal field $\bbb$ is also periodic with the same period.
Let $F=(\bbt, \bbq_1,\bbq_2)$ be any choice of rotation minimizing frame along $\bbr$. 
Then $F$ is periodic with period $L$ if and only if the total torsion $T$ is an integer 
multiple of $2\pi$.
\end{lemma}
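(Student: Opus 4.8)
The plan is to compare the rotation minimizing frame against the Frenet frame, whose rotational behaviour is controlled by the torsion. The key observation is that both the Frenet frame $(\bbt, \bbn, \bbb)$ and the rotation minimizing frame $(\bbt, \bbq_1, \bbq_2)$ share the same tangent $\bbt$, and both $(\bbq_1, \bbq_2)$ and $(\bbn, \bbb)$ are orthonormal bases of the normal plane $\bbt^\perp$. Hence at each parameter value $s$ there is an angle $\theta(s)$ such that the rotation minimizing frame is obtained from the Frenet frame by rotating through $\theta(s)$ in the normal plane, i.e.
\[
\bbq_1 = \cos\theta \, \bbn + \sin\theta \, \bbb, \qquad
\bbq_2 = -\sin\theta \, \bbn + \cos\theta \, \bbb.
\]
First I would differentiate these relations and impose the defining condition of the rotation minimizing frame, namely that $\bbq_1'$ and $\bbq_2'$ are parallel to $\bbt$ (no rotation in the normal plane). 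Using the Frenet equations $\bbn' = -\kappa \bbt - \tau \bbb$ and $\bbb' = \tau \bbn$ (with the sign conventions fixed in the paragraph preceding the lemma), the component of $\bbq_1'$ in the $\bbq_2$ direction must vanish. A direct computation shows this component equals $\theta' + \tau$, so the no-rotation condition is exactly the ODE $\theta'(s) = -\tau(s)$.

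Having identified the governing equation, the periodicity criterion follows by integration. Integrating $\theta' = -\tau$ over one period gives $\theta(L) - \theta(0) = -\int_0^L \tau(s)\, \dd s = -T$. Now the frame $(\bbt, \bbq_1, \bbq_2)$ is periodic precisely when $\bbq_1(L) = \bbq_1(0)$ and $\bbq_2(L) = \bbq_2(0)$. Since the Frenet frame is assumed periodic (this is where the hypothesis that $\bbt$, $\bbn$, and $\bbb$ are periodic enters: periodicity of $\bbt = \bbr'$ is automatic, periodicity of $\bbb$ is assumed, and $\bbn = \bbb \times \bbt$ is then periodic too), the vectors $\bbq_1, \bbq_2$ at $s = L$ are obtained from those at $s=0$ by rotating through the extra angle $\theta(L) - \theta(0) = -T$. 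This rotation is the identity if and only if $T$ is an integer multiple of $2\pi$, which gives the stated equivalence.

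The main point requiring care is the reduction of the no-rotation condition to the scalar equation $\theta' = -\tau$, together with being scrupulous about sign conventions, since the lemma's statement uses the \emph{signed} curvature/torsion conventions $\bbt' = \kappa \bbn$ and $\bbb' = \tau \bbn$ introduced just above, rather than the standard ones. A secondary subtlety is making sure the hypotheses genuinely force the Frenet frame to be globally periodic so that the comparison angle is the only obstruction; the assumed periodicity of $\bbb$ is exactly what supplies this, and one should note that it cannot be dropped because along straight segments the binormal (and hence $\bbn$) is not determined by the curve alone. Once the angle $\theta$ is correctly related to the torsion, the conclusion is immediate.
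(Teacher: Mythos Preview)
Your proposal is correct and follows essentially the same approach as the paper: both compare the rotation minimizing frame to the (periodic) Frenet frame and identify the torsion as the relative rotation rate in the normal plane, so that the total angle discrepancy after one period is $\pm T$. One small slip: with the conventions $\bbt'=\kappa\bbn$, $\bbb'=\tau\bbn$ (hence $\bbn'=-\kappa\bbt-\tau\bbb$), the $\bbq_2$-component of $\bbq_1'$ works out to $\theta'-\tau$, not $\theta'+\tau$, so the ODE is $\theta'=\tau$ and $\theta(L)-\theta(0)=+T$; this does not affect the conclusion.
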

\begin{proof}
Because $\bbt$ and $\bbb$ are periodic with the same period, 
the Frenet frame $(\bbt, \bbn, \bbb)$ also has this property.
 The torsion $\tau$ of the curve, given by
 $\bbb^\prime(s) = \tau(s) \bbn(s)$, is the rate of rotation of 
the basis $(\bbn, \bbb)$ for the plane $\Pi_s = \hbox{span}(\bbn, \bbb)$ 
compared with the zero rotation of the rotation minimizing  basis $(\bbq_1, \bbq_2)$.
The latter basis (and hence $F$) thus closes up if and only the 
$2 m \pi = \int \tau \dd s$, for some $m \in {\mathbb Z}$.
\end{proof}
Let us define a \emph{Monge cylinder} to be a regular Monge surface with closed spine curve 
and closed rotation minimizing frame.
Note that this definition does not depend on the choice of spine curve: if $\bbr$ is the spine curve, then any other spine curve is of the
form $\tilde\bbr(s)=\bbr(s)+a\bbq_1(s)+b\bbq_2(s)$, which has the same properties.
Given a Monge cylinder, it is a simple matter to construct a Monge torus by choosing as profile curve
a small closed curve $\gamma_0$ in the plane
$\Pi_0 = \hbox{span}(\bbq_1(0), \bbq_2(0))$, such that the origin lies on $\gamma_0$. 
The Monge surface so constructed is regular on an open set containing the
spine curve, which lies on the surface.
  Since the image of the spine curve is compact, it is always possible to choose  $\gamma_0$ small enough so that
the torus produced is regular.

\subsection{Constructing Monge tori}
Given the characterization in Lemma \ref{closedlemma}, it is not difficult to find examples of Monge cylinders, and hence tori.
For example, a plane curve has zero torsion, but more generally, one can show \cite{scherrer} that a surface in $\real^3$ is a part of
a sphere or a plane if and only if the  total torsion of any closed curve in the surface is zero. 
 Hence any close curve in a sphere can also be used to produce Monge cylinders.

The problem of constructing more general examples amounts to finding more general 
curves that satisfy the closing condition of Lemma \ref{closedlemma}.  We are also interested
in the case that the total torsion is an \emph{odd} multiple of $\pi$, because this
allows us to construct M\"obius strips and Klein bottles.  More generally still, since the total
torsion is the total rotation angle of the rotation minimizing frame as the curve is traversed,
by taking a profile curve $\gamma_0$ that has a rotational symmetry of order $n$ around the
origin, and using a spine curve with total torsion $2k\pi/n$ (where $k$ is an integer), the Monge surface will close up
after one period of the loop, although the Monge parameterization will be a covering of degree $d\leq n$, 
depending on the common divisors of $n$ and $k$.
If the total torsion is an irrational multiple of $\pi$, then a \emph{circle} centered at the origin
 can be used as profile curve, to produce a \emph{tubular} surface that is not a 
parameterized Monge torus,  but the image of which is a torus. Note that the spine curve 
does not lie on the surface in this case, but at the center of the tube.  Examples constructed
using the method described in the next subsection are shown in Figure \ref{fig:tori}.

\begin{figure}[h!tbp]
	\begin{center}
	$
	\begin{array}{ccc}
		\includegraphics[height=29mm]{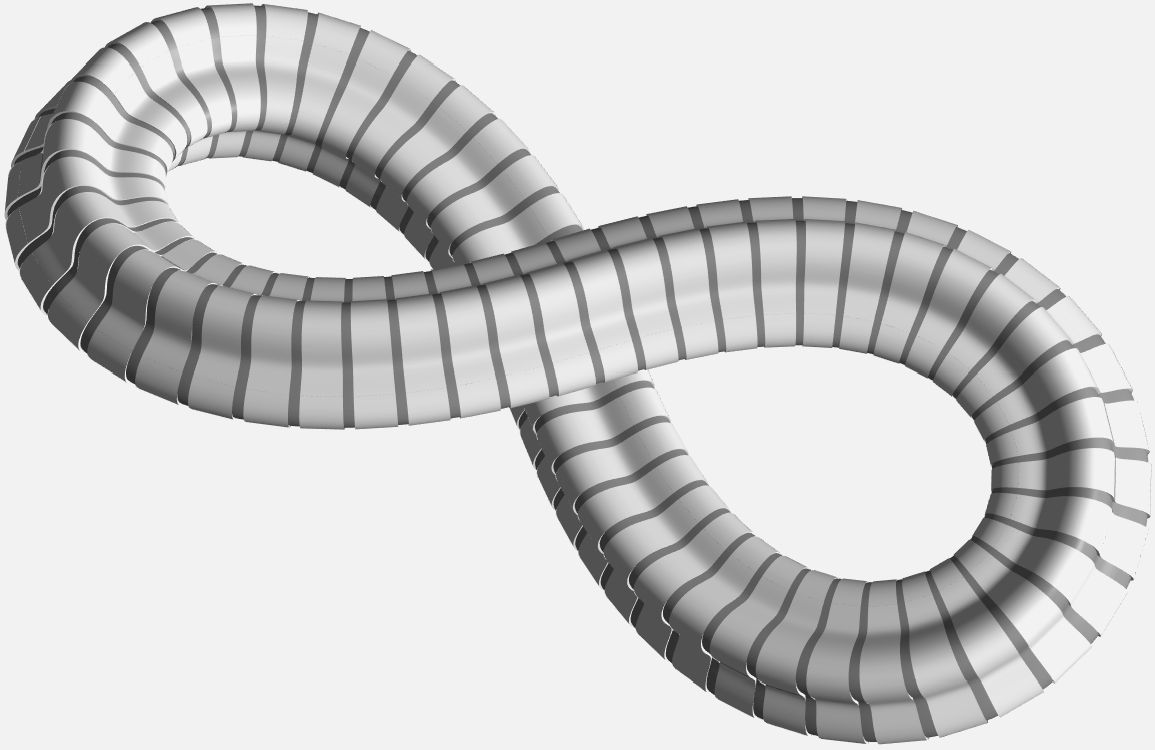}  	\quad & 
				\includegraphics[height=29mm]{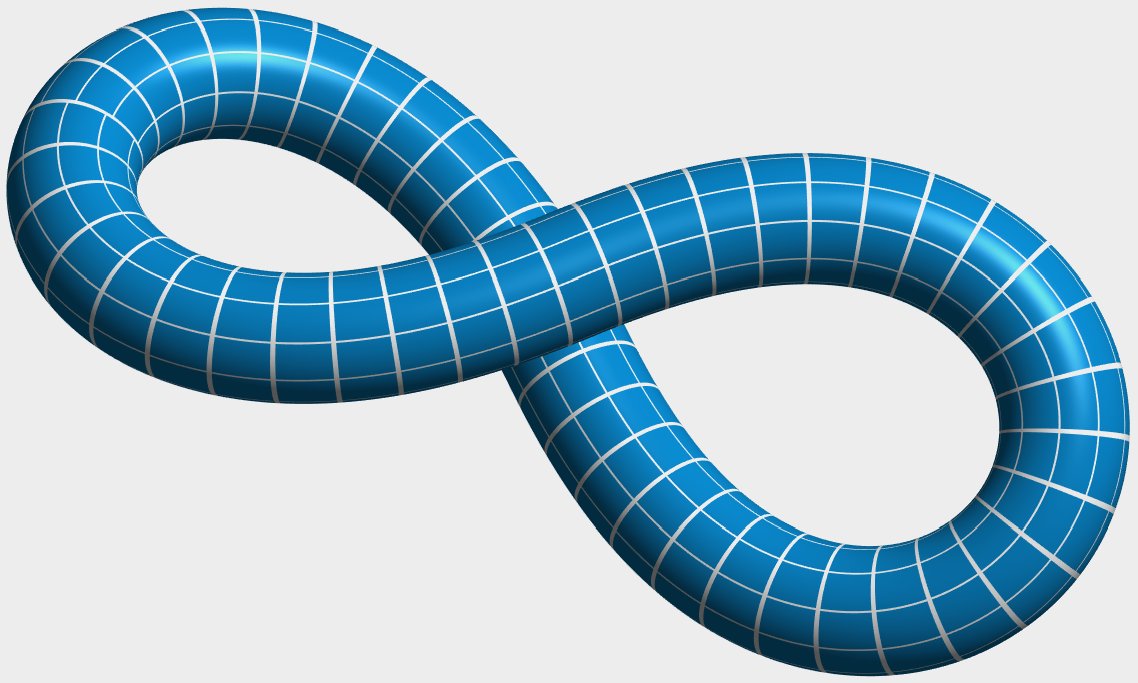}  & 
					\includegraphics[height=29mm]{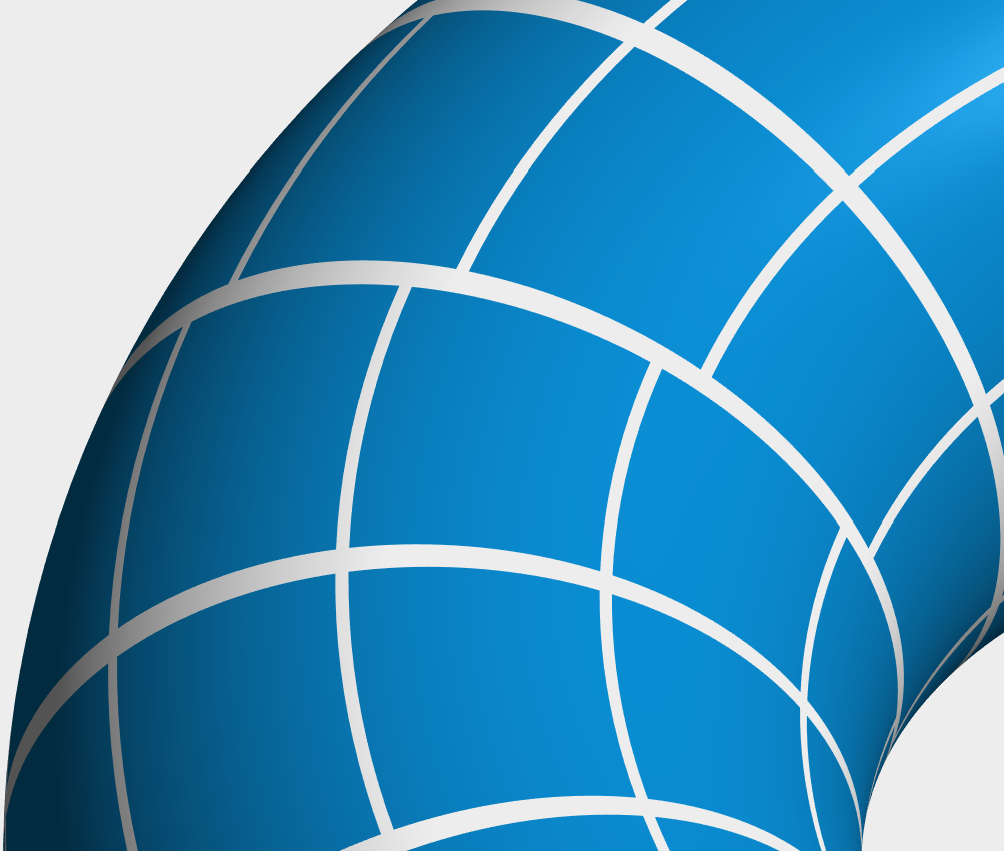}  	
\end{array}
	$
		\end{center}
	\caption{Monge coverings of embedded tori. Left: $T=L_S = 2\pi/4$, degree of covering is $4$.
	The profile curve has order $4$ symmetry about the origin of the plane.
	Right: $T=L_S = 2\pi/\pi$, degree of covering is uncountably infinite. The profile curve is a circle,
	the parallels do not close up.}
	\label{fig:tori}
\end{figure}

It is thus interesting to know whether one can find closed curves with arbitrary values for the total torsion,
or at least with arbitrary values in $[0,2\pi)$. 
The answer to this question can be found in a survey  by Fenchel \cite{fenchel}, the relevant
details of which we recall here: 
A convenient way to find curves with a specified value
for the total torsion is to look for a curve with everywhere \emph{positive}
torsion, because if $s$ is the arc-length
on a curve $\bbr$ and $s_B$ the arc-length of the binormal $\bbb$ then $\dd s_B= |\tau(s)| \dd s$.
Hence, if $\tau>0$, and $L_S$ is the length of the trace of $\bbb$ on $\SSS^2$, then
\[
T = \int _0^L \tau(s)\dd s = \int_0^{L_S} \dd S_B = L_S.
\]
In other words, if $\tau$ is non-vanishing, then
the total torsion is just the length of the trace of the binormal curve on $\SSS^2$.
Fenchel also shows that a closed curve $\bbb$ in $\SSS^2$ is the binormal to a \emph{closed} curve
$\bbr$ in $\real^3$ if and only if the great circles tangent to $\bbb$ sweep out the whole of the
$2$-sphere.  Moreover, the torsion of $\bbr$ is non-vanishing if $\bbb$ 
has no  cusps. There are many examples of such curves $\bbb$ (see Figure \ref{figcurves}), and 
 curves such as those shown can clearly be scaled so that the length is anything
 between $0$ and $2\pi$, which solves  the problem under discussion.

\begin{figure}[h!tbp]
	\begin{center}
	$
	\begin{array}{cc}
		\includegraphics[height=30mm]{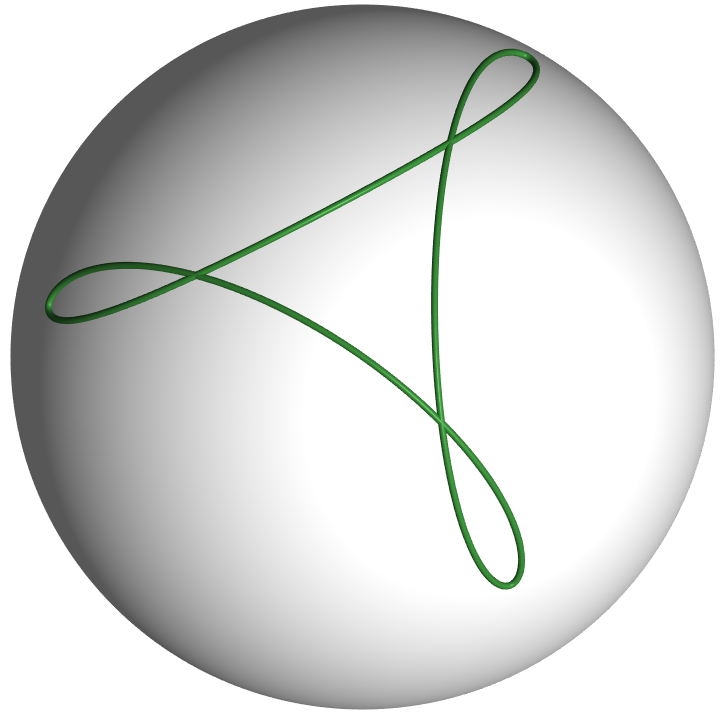}  	\quad & \quad
				\includegraphics[height=30mm]{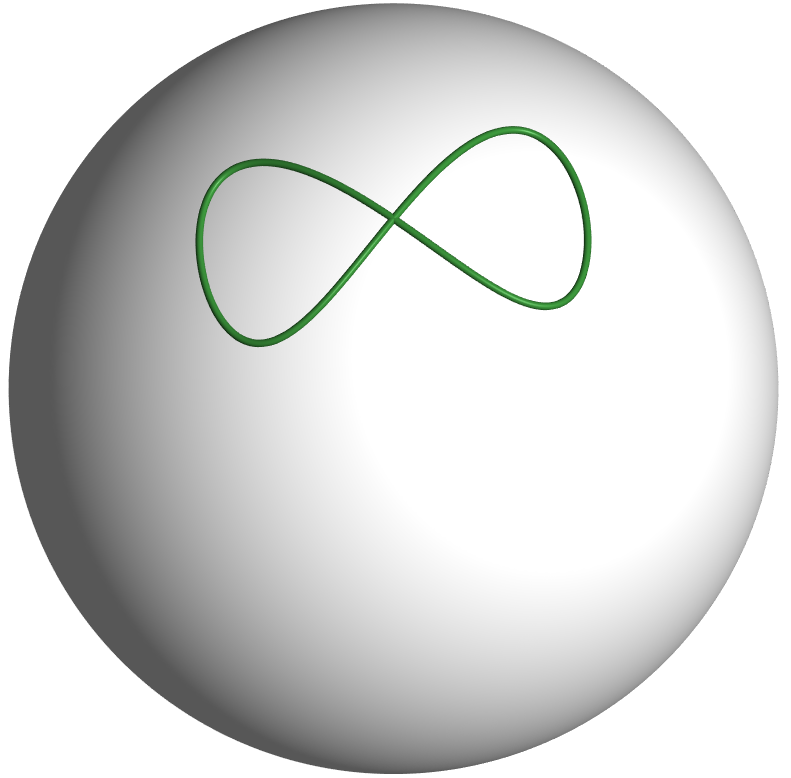}  	
\end{array}
	$
		\end{center}
	\caption{ Examples of binormal images of closed space curves.}
	\label{figcurves}
\end{figure}

Note that we only obtain curves $\bbr$ with non-vanishing torsion this way, and this does not
include all interesting cases.   For example if $\bbr$ takes values in 
 a sphere then the total torsion is zero.  Hence the torsion must change 
sign, unless $\bbr$ is a plane curve. It is possible to modify our discussion to include
these by including curves $\bbb$ that have cusps, but we omit this for the sake of simplicity.

\subsection{Explicit constructions}
Given a curve $\bbb$ in the $2$-sphere with the properties discussed above,
there are infinitely many closed space-curves $\bbr$ to choose from that have $\bbb$ as
the binormal field, so we would like to consider some natural choices.
For example closed curves with
non-vanishing curvature and \emph{constant} torsion are found in work by 
J. Weiner \cite{weiner1977},  and Bates and Melko \cite{batesmelko}, however 
to construct these a special choice of $\bbb$ is needed. Here we take $\bbb$ to be
arbitrary, subject to the conditions mentioned above.

Let $\bbb(t)$ be some parameterization of a regular closed curve in $\SSS^2$, 
with period $P$,
and the  length of one period $L_S$.
If $\bbb$ is the binormal field to a space curve $\bbr$ with unit tangent and normal fields $\bbt$ and $\bbn$ respectively,
then $\bbb^\prime$ is proportional to $\bbn$, so $\bbt = \bbn \times \bbb = (\bbb^\prime/|\bbb^\prime|) \times \bbb$. If $s$ is the arc-length parameter of $\bbr$, then
\[
\bbt = \frac{1}{\tau} \frac{\dd t}{\dd s}\frac{\dd \bbb}{\dd t} \times \bbb,
\]
and
\[
\bbr = \int \bbt \frac{\dd s}{\dd t} \dd t = \int \frac{1}{\tau} \frac{\dd \bbb}{\dd t} \times \bbb \, \dd t.
\]
Given a choice of $\bbb$, a curve $\bbr$ with binormal field $\bbb$ is
 determined by any choice of function $\sigma(t) := 1/\tau(t)$.
The closing condition for $\bbr$ is
\beq  \label{closingcond}
\int_0^P \sigma \frac{\dd \bbb}{\dd t} \times \bbb \, \dd t = 0,
\eeq
and clearly many choices for $\sigma$ will be available.  To find a solution, we can set
\[
\sigma(t) = \sum_{i=1}^n c_i B_i(t),
\]
where $B_i$ are some periodic basis functions, such as trigonometric functions or periodic B-splines.
  Then \eqref{closingcond} is:
\[
\sum_{i=1}^n c_i {\bf a}_i = 0, \quad \quad {\bf a}_i = \int_0^T B_i(t) \bbb(t) \times \bbb^\prime(t) \dd t.
\]
Thus, given a choice of basis functions, the solutions are found by linear algebra.  There can be many 
solutions for $c_i$, depending on the choice of basis functions.  To choose a natural solution,
we have used an optimization to minimize the elastic energy, $\int_0^L \kappa^2(s) \dd s$.
To write this quantity in terms of $\bbb$ and $\sigma$, let $s_T$ denote the arc-length parameter of
the curve $\bbt$ and compute
\[
\kappa =
 \tau \left| \bbb \times \frac{\dd ^2 \bbb}{\dd s_B^2}\right|
= \tau \left| \det \left( \bbb \, \frac{\dd \bbb}{\dd s_B}  \,\frac{\dd^2 \bbb}{\dd s_B^2} \right) \right|,
\]
and hence
\[
\mathcal{E}= \int_0^L \kappa^2 \dd s = \int_0^T \frac{1}{\sigma} \frac{\det(\bbb \bbb^\prime \bbb^{\prime \prime})^2}{|\bbb^\prime|^5} \dd t.
\]

\subsection{Concluding remarks}
We have used the method just described to produce the examples shown in Figure \ref{fig:tori}.  More 
examples can be found in the article \cite{bridges2017}.
If we scale $\bbb$  to have length $\pi$, and use a profile curve that passes through the origin and is
symmetric about the origin, then a M\"obius strip is obtained.  If the profile curve is a straight line
the surface is developable. The problem of constructing developable M\"obius strips has been considered,
for example, in \cite{wunderlich1962} and \cite{chicone}. The method of \cite{chicone} is, 
in essence, similar to that
presented here, only the authors assume that the curvature of  $\bbr$ is non-vanishing. In  \cite{rr1996},
developable M\"obius strips are characterized by a different method, using the so-called \emph{center}
geodesic curve instead of the spine curve.

If we take the spine curve of a developable M\"obius strip, and take the profile curve to be a symmetric
figure-$8$ curve, we obtain a Monge Klein bottle (see \cite{bridges2017}).   Matlab functions that allow
one to compute a Monge surface from a given spine curve and profile curve are currently 
available at  \href{http://geometry.compute.dtu.dk/software}{http://geometry.compute.dtu.dk/software}.


\end{document}